\newcommand\Bigpar[1]{\Bigl(#1\Bigr)}
\def\rompar(#1){\textup(#1\textup)}    
\def\xexp(#1){e^{#1}}
\newcommand\ceil[1]{\lceil#1\rceil}
\newcommand{\refT}[1]{Theorem~\ref{#1}}
\newcommand{\refCl}[1]{Claim~\ref{#1}}
\newcommand{\indic}[1]{\mathbbm{1}_{\{{#1}\}}}
\newcommand{\cPr}{\mathbb{P}}
\newcommand{\E}{\mathbb{E}}
\newcommand{\Bin}{\operatorname{Bin}}
\newcommand{\Var}{\operatorname{Var}}
\newtheorem{theorem}{Theorem}[section]
\newtheorem{remark}[theorem]{Remark}
\newtheorem{conjecture}[theorem]{Conjecture}
\newtheorem{claim}[theorem]{Claim}
\title{Rainbow cycles for families of matchings}
\author{Ron Aharoni\thanks{Faculty of Mathematics, Technion, Haifa~32000, Israel and MIPT. E-mail: {\tt ra@technion.ac.il}. Research supported by the Israel Science Foundation (ISF) grant no. 2023464 and the Discount Bank Chair at the Technion, and the European Union's Horizon 2020 research and innovation programme under the Marie Skldowska-Curie grant agreement no.\ 823748.}
~and He Guo\thanks{Faculty of Mathematics, Technion, Haifa~32000, Israel. E-mail: {\tt hguo@campus.technion.ac.il}.}}
\date{October 27, 2021; Revised on November 10, 2023}
\begin{document}

\maketitle
\begin{abstract}
    Given a graph~$G$ and a coloring of its edges, a subgraph of~$G$ is called {\em rainbow} if its edges have distinct colors. The {\em rainbow girth} of an edge coloring of~$G$ is the minimum length of a rainbow cycle in~$G$. 
    A  generalization of the famous Caccetta-H\"aggkvist conjecture, proposed by the first author, is that if in an coloring of the edge set of an $n$-vertex graph by $n$ colors, in which each color class is of size~$k$,  the rainbow girth is at most~$\lceil \frac{n}{k} \rceil$. In the known examples for sharpness of this conjecture the color classes are stars, suggesting that when the color classes are matchings, the result may be  improved. We show that  the rainbow girth of~$n$ matchings of size at least 2 is~$O(\log n)$.
\end{abstract}

\section{Introduction}
The {\em girth} $g(G)$ of a graph $G$ is the minimal length of a cycle in it. Given a graph~$G$ and  a (not necessarily proper) coloring of~its edges, a subgraph~of~$G$ is called \emph{rainbow} if its  edges have distinct colors. The \emph{rainbow girth} $rg(G)$  of~$G$ (actually, of its edge-coloring) is the minimum length of a rainbow cycle. All the above definitions apply to both the directed and undirected cases, where in the directed case the cycles are assumed to be directed. 

The  famous Caccetta-H\"{a}ggkvist conjecture \cite{CaccettaHaggkvist} (below - CHC) is that any digraph $G$ on $n$ vertices satisfies $g(G) \le \lceil \frac{n}{\delta^+(G)}\rceil$, where $\delta^+(G)$ is the minimal out-degree of a vertex. There has been constant progress~\cite{ChvatalSzemeredi, Hamidoune, HoangReed, Nishimura} on the problem. In particular it has been shown that 

(a) The CHC is true if $n\ge 2\delta^+(G)^2-3\delta^+(G)+1$ ~\cite{Shen1},  and

(b)  $g(G)\le n/\delta^+(G)+73$ for all~$G$ ~\cite{Shen2}.

In~\cite{ADH2019} a possible generalization of CHC was suggested. 

\begin{conjecture}\label{rainbow}
Let $G$ be an undirected  $n$-vertex graph. For any  edge coloring of~$E(G)$ with~$n$ colors such that each color class has size at least~$k$, we have $rg(G) \le \ceil{n/k}$. 
\end{conjecture}

Devos et. al.~\cite{DDFGGHMM2021}  proved this conjecture for $r=2$. In~\cite{ABCGZ2021} a stronger version of the conjecture was proved when all sets are of size $1$ or $2$. 

For a directed edge $e=uv$  let $n(e)$
be the undirected pair $\{u,v\}$. To see that Conjecture~\ref{rainbow} is a generalization of CHC, given a directed graph $G$, for every vertex $u$ let $S(u)=\{n(uv) \mid uv \in E(G)\}$  be the star  of edges leaving $u$,  with their direction removed. We claim that an undirected rainbow cycle $v_1v_2\ldots v_k$ for the sets $S(v)$ gives rise to a directed cycle in $G$. Otherwise there exists a vertex $v_i$ such that $\{v_i,v_{i+1}\}=n(v_iv_{i+1})$ and $\{v_i,v_{i-1}\}=n(v_{i}v_{i-1})$. But this   contradicts the fact that only one edge is chosen from $S(v_i)$ for participation in the rainbow cycle. Thus the
CHC is  equivalent to the case of Conjecture \ref{rainbow} in which
 the color classes are $n$ stars, each centered at a different vertex. Hence the sharpness of CHC implies that of Conjecture \ref{rainbow}. 
 
The standard example showing that the former is the case is the graph on~$\{1,2,\dots, n\}$
with  edges $\{i,i+1\},\{i, i+2\},\dots,\{i, i+k\}$ for $i=1,2,\dots, n$  (indices taken  modulo $n$). But this example is not unique. 
Bondy~\cite{bondy} noted that if $G$ and $H$ are two graphs witnessing the sharpness of CHC, then blowing each vertex of $G$ by a copy of $H$ yields another such example (the blow-up is called also the {\em lexicographic product} of $G$ and $H$).  In \cite{razborov} more examples are given (accompanied by a conjecture that these  exhaust all possible cases of equality in the conjecture). 

By the above argument, every example witnessing the sharpness of CHC gives rise to an example witnessing the sharpness of Conjecture \ref{rainbow}. In fact, all known extreme
examples for this conjecture are obtained  this way,  in particular they have stars as the sets of edges.  
This suggests that in the antipodal case to that of stars, when the sets of edges are matchings, the conjecture can be strengthened. (``Antipodal'' is with respect to the  covering number, which is $1$ in a star, and the number of edges in a matching.)
Indeed, a simple observation is that for the first open case  of CHC, that of $\delta^+=\frac{n}{3}$ (in which a directed triangle is conjectured to exist) the rainbow undirected version is trivial when the sets of edges are matchings. In this case, it is enough that the arithmetic mean of the sizes of the sets is larger than $\frac{n}{4}$, because then by Mantel's theorem there exists a triangle contained in the union of the sets, and if the sets are matchings then a triangle is necessarily rainbow.

\section{Rainbow cycles for matchings}

Our main result is a corroboration of the intuition that sets of  matchings have small rainbow girth. 

\begin{theorem}\label{thm:main}
There exists a constant $C$ such that for any $n$-vertex graph~$G$ and edge coloring of~$G$ with~$n$ colors, if each color class is a matching of size 2, then the rainbow girth of~$G$ is at most~$C\log n$.
\end{theorem}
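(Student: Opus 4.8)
The plan is to build the graph's structure incrementally and find a short rainbow cycle via a counting/density argument, exploiting the fact that each color class is a matching of exactly two edges. So we have $n$ vertices, $n$ colors, and each color contributes exactly $2$ disjoint edges, giving $2n$ edges total — an average degree of $4$. My first move is to reformulate the problem: define an auxiliary multigraph $H$ on the same vertex set $V(G)$ where, for each color $i$, I replace the matching $\{a_ib_i, c_id_i\}$ by a structure that records the ``linking'' information. The natural idea is that a rainbow cycle in $G$ corresponds to a cycle in $G$ using each color at most once, so I want to show that any graph with $2n$ edges on $n$ vertices with this matching-coloring must contain a short cycle hitting distinct colors.

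\medskip\noindent
The key step is a \emph{breadth-first / expansion} argument. I would pick a starting vertex and grow a BFS tree, tracking at each level both the vertices reached and the set of colors already used on the tree path to each vertex. Since the average degree is $4$ and each color appears on only two edges, the branching factor stays bounded below, so after $O(\log n)$ levels the number of reached vertices would exceed $n$ if no collision occurred — forcing two tree-paths to meet at a common vertex. The meeting of two paths yields a cycle of length $O(\log n)$. The delicate point is ensuring the cycle so produced is \emph{rainbow}: the two paths to the meeting point, together with whatever edge closes them, must use pairwise distinct colors. To control this, I would restrict the BFS so that along any root-path no color is repeated, discarding at each vertex the (at most one) neighbor that would reuse a color already spent — this costs only a constant factor in the branching and preserves exponential growth. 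When two color-disjoint paths meet, I would need them to also be color-disjoint from each other; a union bound or a careful pruning that partitions colors between the two growing ``sides'' should handle this.

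\medskip\noindent
Concretely, I would try the following clean variant. Consider the BFS tree rooted at $v$, and let $N_t$ denote the set of vertices at distance $t$ whose tree-paths are rainbow. I would prove a lower bound $|N_{t+1}| \ge \lambda |N_t|$ for some constant $\lambda > 1$, using that total edge count $2n$ forces most frontier vertices to have forward-neighbors not yet exhausted by color repetition (each already-used color kills at most one potential edge, and there are at most $t = O(\log n)$ used colors per path, which is a lower-order loss against the constant average degree). Once $|N_t| > n/2$ for two independently grown, color-separated trees — or once a single tree's frontier forces a short chord — a collision gives the rainbow cycle. The main obstacle I anticipate is the coupling of the two requirements: ensuring the discovered cycle is simultaneously short \emph{and} rainbow, since the expansion argument controls length easily but the rainbow condition interacts with the branching in a way that can degrade growth if too many colors accumulate along long paths. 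Handling the accumulated-color loss — showing it stays subdominant over $O(\log n)$ levels — is where I expect the real work to lie, and it likely requires either a clever charging of repeated colors to edges or a random two-coloring of the color set to decouple the two sides of the eventual cycle.
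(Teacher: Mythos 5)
Your proposal has two genuine gaps, one fatal to the stated plan. The central lemma you propose --- that the BFS frontiers satisfy $|N_{t+1}|\ge \lambda |N_t|$ for a constant $\lambda>1$ because the average degree is $4$ --- is false. A graph with $n$ vertices and $2n$ edges need not expand from any vertex: the square of a cycle, $C_n^2$, has exactly $2n$ edges, its edge set can be partitioned into $n$ matchings of size $2$ (pair each short edge $\{i,i+1\}$ with a far-away long edge), yet every BFS frontier has size $4$ for $\Omega(n)$ levels. Average degree $4$ only guarantees a subgraph of minimum degree $2$, which gives no expansion at all; to get exponential growth you would need minimum degree $3$, which requires strictly more than $2n$ edges and is not available here. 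This is exactly why the paper does not run a naive BFS: it first uses a random vertex-deletion argument (keeping each vertex with probability $p$ close to $1$) to produce a set $S$ with $|S|\le (1+\epsilon)pn$ that still contains at least one full edge from $(1-\epsilon)(2p^2-p^4)n > |S|+\delta n$ color classes, and then invokes the Bollob\'as--Szemer\'edi theorem, which extracts a cycle of length $O\bigl(\frac{n}{k}\log k\bigr)$ from a graph with only $n+k$ edges; that theorem is doing real work that your expansion heuristic cannot replace.

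The second gap is the rainbow condition, which you explicitly leave open (``a union bound or a careful pruning \ldots should handle this''). The paper dissolves this difficulty structurally rather than dynamically: after sampling $S$, it keeps only one surviving edge per color, so the resulting subgraph has all edges distinctly colored and \emph{every} cycle in it is automatically rainbow; the whole burden is then to show that more than $|S|$ colors survive, i.e.\ that $2p^2-p^4>p$ for $p$ close to $1$ (plus a second-moment concentration argument). Note that this inequality is precisely where the hypothesis that each color class is a \emph{matching} enters: if the two edges of a color shared a vertex, the survival probability would be $2p^2-p^3<p$ for all $p$, and indeed the theorem is false for such ``cherry'' classes (the stars example has rainbow girth $\lceil n/2\rceil$). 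Your sketch uses the size-$2$ hypothesis only as ``each used color kills at most one neighbor,'' which applies equally to cherries, so as written your argument cannot distinguish the true statement from a false one. Any repair would have to inject the disjointness of the two edges per color in an essential quantitative way, and to replace the expansion lemma with something like the Bollob\'as--Szemer\'edi bound.
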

\begin{remark}
The assumption  
that $G$ is a graph and not a multigraph breeds no loss of generality, 
since a double edge is a rainbow digon, meaning that the rainbow girth is $2$. \end{remark}

A key ingredient in the proof is a result by Bollob\'as and Szemer\'edi~\cite{BS2002} on the girth of sparse graphs.

\begin{theorem}\label{thm:girthsparsegraph}
For $n\ge 4$ and $k\ge 2$, every $n$-vertex graph with $n+k$ edges has girth at most
\[\frac{2(n+k)}{3 k}(\log k+\log\log k+4). \]
\end{theorem}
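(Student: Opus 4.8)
The plan is to reduce $G$, via its $2$-core and the suppression of degree-$2$ vertices, to a small edge-weighted multigraph of minimum degree at least $3$, and then to bound the \emph{weighted} girth of that multigraph by a Moore-type counting argument. First I would pass to the $2$-core $C$, obtained by iteratively deleting vertices of degree at most $1$. Each such deletion removes at most one edge and one vertex, hence destroys and shortens no cycle, so $C$ has the same girth as $G$; moreover the excess $|E|-|V|$ never decreases, so if $C$ has $N$ vertices it has $N+\kappa$ edges for some $\kappa\ge k$, while $N+\kappa\le n+k$ because only edges were removed. I would then suppress every degree-$2$ vertex, replacing its two incident edges by one edge carrying as weight $\ell(e)\ge 1$ the length of the $C$-path it represents. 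This produces a multigraph $H$ of minimum degree at least $3$ with $\sum_e\ell(e)=N+\kappa$ whose weighted girth equals the girth of $G$. Since $\sum_v(\deg_H(v)-2)=2\kappa$ and every term is at least $1$, $H$ has $b\le 2\kappa$ vertices and $b+\kappa\le 3\kappa$ edges.

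It then suffices to produce in $H$ a cycle of weight at most $\tfrac{2(N+\kappa)}{3\kappa}\bigl(\log_2\kappa+\log_2\log_2\kappa+O(1)\bigr)$: the theorem follows by monotonicity, since $N+\kappa\le n+k$ and the function $\kappa\mapsto(\log_2\kappa+\log_2\log_2\kappa+4)/\kappa$ is decreasing for $\kappa\ge k\ge 2$, so replacing $\kappa$ by $k$ and $N+\kappa$ by $n+k$ only enlarges the bound. (The small cases forced by $n\ge4$, $k\ge2$ are absorbed into the additive constant.) The task is thus a \emph{weighted Moore bound}: in a multigraph with $b$ vertices, minimum degree at least $3$ and positive edge-weights of total sum $W$, find a short cycle. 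I would run a breadth-first search from a root in order of weighted distance; while no cycle has closed the explored region is a tree, and minimum degree at least $3$ forces at least two children per vertex, so the vertex count grows geometrically with the number of levels. Translating levels into an accumulated weight of roughly $W/e(H)$ per level and optimizing the radius gives a cycle of weight about $\tfrac{2W}{e(H)}\log_2 b$, which in the extremal cubic case $b=2\kappa$, $e(H)=3\kappa$ is exactly $\tfrac{2W}{3\kappa}(\log_2\kappa+1)$.

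The main obstacle is precisely this weighted step, in two respects. First, edges have unequal lengths, so a single long edge lets the weighted ball expand while consuming few vertices; controlling this means splitting the edges at a threshold $\lambda$ (above which there are at most $W/\lambda$ edges), letting the short edges drive the geometric growth while the long edges are charged separately, and the balancing of $\lambda$ is what converts a clean $\log_2\kappa$ into $\log_2\kappa+\log_2\log_2\kappa+O(1)$. Second, when $b\ll\kappa$ the average degree $d=2e(H)/b$ is large and the naive min-degree-$3$ growth factor $2$ is too weak to preserve the constant $\tfrac23$; there one must count with the true average degree (for instance via non-backtracking walks), so that the per-level growth is $\approx d-1$ and the estimate sharpens to $\tfrac{2W}{e(H)}\log_{d-1}b$, whose maximum over $0<b\le2\kappa$ is attained, as a short computation confirms, at the cubic endpoint. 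Carrying out both refinements simultaneously and uniformly in the weights is the technical core of the argument.
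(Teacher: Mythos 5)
First, a point of order: the paper does not prove Theorem~\ref{thm:girthsparsegraph} at all --- it is imported as a black box from Bollob\'as and Szemer\'edi~\cite{BS2002}, so there is no internal proof to compare against. Your attempt therefore has to be judged against the original argument, whose opening moves your reduction does mirror.

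Those opening moves are essentially correct: passing to the $2$-core preserves the girth and cannot decrease the excess; suppressing degree-$2$ vertices yields a weighted multigraph $H$ of minimum degree at least $3$ whose weighted girth equals the girth of $G$; and the identity $\sum_v(\deg_H(v)-2)=2\kappa$ gives $b\le 2\kappa$ vertices and at most $3\kappa$ edges. The monotonicity step replacing $(N+\kappa,\kappa)$ by $(n+k,k)$ also checks out. (One technicality you skip: a component of the $2$-core that is a bare cycle has no vertex of degree $\ge 3$ and cannot be suppressed; such components must be discarded, which is harmless since they carry zero excess.) The genuine gap is that everything in the theorem beyond a soft $O\bigl(\frac{n+k}{k}\log k\bigr)$ bound lives exactly in the step you label ``the technical core'' and do not carry out: the weighted Moore bound with the constant $\frac{2}{3}$ and the error term $\log k+\log\log k+4$. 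The heuristics you offer in its place do not constitute that proof. The claim that a breadth-first exploration accumulates ``roughly $W/e(H)$ per level'' is an averaging statement that is false for adversarial weights --- nothing prevents all the heavy edges from sitting near the root or along a single root-to-leaf path, so individual levels (and hence the closing cycle) can be far heavier than average. Likewise, ``splitting at a threshold $\lambda$ and charging long edges separately'' is a name for the difficulty, not a resolution of it: a breadth-first path may use many of the at most $W/\lambda$ heavy edges, and controlling this, while simultaneously running the growth count at the true average degree rather than the minimum degree $3$ (which, as you correctly note, is needed to get $\frac{2}{3}$ rather than a weaker constant), is precisely the content of the Bollob\'as--Szemer\'edi paper and the source of the $\log\log k$ term. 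What you have, then, is a correct reduction of the theorem to its hard kernel together with an accurate diagnosis of why that kernel is hard --- but not a proof of it.
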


The logarithms are to the base $2$.~\refT{thm:main} will follow from this result, and the following:
 
 \begin{theorem}\label{thm:bridge_to_bs}
 There exist universal $c,\delta>0$,  such that for any large enough $n$, given an $n$-vertex graph~$G$ and an edge coloring of~$G$ with~$n$ colors such that each color class is a matching of size $2$, there exists  a 
 subset~$S$ of $V(G)$ of size at most $cn$  containing the edges of a rainbow set of edges of size at least $(c+\delta)n$.
 \end{theorem}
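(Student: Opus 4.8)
The plan is to produce $S$ by a random choice and to show that, with positive probability, $S$ is small while many colour classes have an edge fully inside $S$; selecting one such edge per colour then gives the required rainbow set. Fix a constant $p$ and put each vertex of~$G$ into $S$ independently with probability~$p$. Write $X=|S|$ and let $Y$ be the number of colours having at least one edge with both endpoints in~$S$; these are exactly the colours from which we may draw a rainbow edge contained in~$S$, so the rainbow set has size~$Y$. Clearly $\E X = pn$. Since each colour class is a matching of size~$2$, it spans four distinct vertices and consists of two vertex-disjoint edges, each lying in~$S$ independently with probability~$p^2$; hence $\E Y = \big(1-(1-p^2)^2\big)n =: \beta(p)\,n$. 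A short calculation, using $p^3-2p+1=(p-1)(p^2+p-1)$, shows $\beta(p)>p$ exactly for $p\in\big((\sqrt5-1)/2,\,1\big)$, so I would fix, say, $p=0.7$, for which $\beta(p)-p =: 2\delta_0 > 0$. Thus in expectation the number of internal colours beats $|S|$ by the linear margin $2\delta_0 n$ — exactly the surplus of edges over vertices that \refT{thm:girthsparsegraph} converts into an $O(\log n)$ cycle.

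To turn this expectation statement into an existence statement I need $X$ and $Y$ to behave well for the same outcome. The variable $X$ is a sum of independent indicators and concentrates by Chernoff's bound, so $X\le(p+\varepsilon)n$ fails only with exponentially small probability. The difficulty — and the main obstacle — is the concentration of~$Y$: flipping a vertex $v$ in or out of~$S$ can change $Y$ by as much as $\deg(v)$, and a single vertex of degree $\Theta(n)$ (a legitimate configuration, since a vertex may lie in one edge of every colour) would already force $\Var(Y)$ up to order $n^2$, swamping the margin $\delta_0 n$.

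I would resolve this by treating high-degree vertices deterministically. Fix a threshold $\Delta=\sqrt n$, let $H=\{v:\deg(v)>\Delta\}$, and redefine~$S$ to contain~$H$ always while each vertex outside~$H$ is included independently with probability~$p$. Since $\sum_v\deg(v)=4n$ we have $|H|\le 4n/\Delta = o(n)$, so $\E X\le pn+o(n)$ and the Chernoff estimate on the binomial part still gives $X\le(p+\varepsilon)n$ with high probability. Forcing extra vertices into~$S$ only raises each colour's chance of having an internal edge: $Y$ is monotone in~$S$ and the law of~$S$ stochastically dominates the fully random one, so the bound $\E Y\ge\beta(p)\,n$ survives. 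Now the only random coordinates are the low-degree vertices, each with Lipschitz constant at most~$\Delta$, and $\sum_{v\notin H}\deg(v)^2 \le \Delta\sum_v\deg(v) \le 4\Delta n$; McDiarmid's inequality then yields $\cPr\big[Y\le\E Y-t\big]\le \exp\!\big(-t^2/(2\Delta n)\big)$, which for $t=\delta_0 n/2$ and $\Delta=\sqrt n$ tends to~$0$.

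Consequently $X\le(p+\varepsilon)n$ and $Y\ge(\beta(p)-\varepsilon)n$ hold simultaneously with probability close to~$1$; choosing $\varepsilon=\delta_0/2$ and setting $c=p+\varepsilon$, $\delta=\delta_0$ produces a set~$S$ with $|S|\le cn$ carrying a rainbow edge set of size $Y\ge(c+\delta)n$, which is the assertion of \refT{thm:bridge_to_bs}; note $c+\delta<1$, consistent with the hard cap $Y\le n$. The one routine point left to check is the stochastic-domination claim that $H\subseteq S$ does not pull $\E Y$ below $\beta(p)n$, together with the bookkeeping that absorbs the $o(n)$ contribution of~$H$ and the Chernoff fluctuations into the single slack parameter~$\varepsilon$.
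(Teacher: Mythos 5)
Your proposal is correct, and its skeleton --- the $p$-random vertex set, the first-moment identity $\E Y=(2p^2-p^4)n$, and the observation that $2p^2-p^4>p$ precisely for $p\in\bigl((\sqrt5-1)/2,\,1\bigr)$ --- is the same as in the paper. The genuine divergence is in how concentration of $Y$ is established. The paper keeps $S$ fully random and applies Chebyshev's inequality (\refT{thm:chebyshev}), bounding the variance of the colour count by a case analysis of $M_i\cup M_j$ and asserting that pairs of matchings sharing a vertex have \emph{negative} covariance for $p$ close to $1$. You instead force the at most $4n/\Delta=O(\sqrt n)$ vertices of degree above $\Delta=\sqrt n$ into $S$ deterministically (which only increases $Y$ by a monotone coupling and adds $o(n)$ to $|S|$), and then apply the bounded-differences inequality to the remaining independent coordinates, with Lipschitz constants $c_v=\deg(v)\le\Delta$ and $\sum_v c_v^2\le 4\Delta n=4n^{3/2}$, so deviations of order $n$ have probability $\exp(-\Omega(\sqrt n))$. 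This buys robustness at exactly the delicate point: the obstruction you identify (a vertex lying in $\Theta(n)$ colour classes) is a legitimate configuration, and for it $\Var Y=\Theta(n^2)$ for every fixed $p\in(0,1)$, because the events $\{X_i=1\}$ are increasing in the independent vertex indicators and hence pairwise nonnegatively correlated by Harris's inequality. Indeed, the paper's enumeration of the configurations with $X_iX_j=1$ in Case I omits the two settings $a,c,d\subseteq S,\ b\not\subseteq S$ and $b,c,d\subseteq S,\ a\not\subseteq S$; restoring them gives $\E X_iX_j-\E X_i\E X_j=p^3(1-p)^3(1+p)^2\ge 0$ rather than a negative quantity, so the second-moment route does not go through as written. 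Your truncation-plus-McDiarmid argument is therefore not merely an alternative but the safer one, and it works for every admissible $p$ rather than only $p$ near $1$ (which would also streamline the proof of \refT{thm:alpha}). The two points you leave as routine --- the stochastic domination giving $\E Y\ge(2p^2-p^4)n$ and the absorption of $|H|$ and the Chernoff slack into $\varepsilon$ --- are indeed routine.
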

 
   Note that the last condition entails $c+\delta \le 1$. Once this is proved,  Theorem 
    \ref{thm:main}  follows by applying Theorem \ref{thm:girthsparsegraph} with $k=\delta n$.
 
 The idea of proving~\refT{thm:bridge_to_bs} is that we take a random subset~$S$ of~$V(G)$ and consider the induced subgraph~$G[S]$. The crux of the argument is that the expected number of vertices $\E |S|$ is polynomially in~$n$ less than the expected number of colors of the edges in~$G[S]$. Furthermore,  these two random numbers are concentrated around their expectations, which follows from two well-known concentration inequalities.

\begin{theorem}[Chernoff]\label{thm:chernoff}
Let $X$ be a binomial random variable $\Bin(n,p)$. For any $0<\epsilon<1$, we have
\[\cPr(X\ge (1+\epsilon)\E X)\le \exp(-\epsilon^2\E X/3). \]
\end{theorem}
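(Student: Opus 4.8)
The plan is to apply the classical exponential-moment (Chernoff--Markov) method. Write $X=\sum_{i=1}^n X_i$, where the $X_i$ are independent Bernoulli random variables with $\cPr(X_i=1)=p$, and set $\mu=\E X=np$. For any $t>0$, Markov's inequality applied to the nonnegative random variable $e^{tX}$ gives
\[
\cPr\bigpar{X\ge (1+\epsilon)\mu}=\cPr\bigpar{e^{tX}\ge e^{t(1+\epsilon)\mu}}\le e^{-t(1+\epsilon)\mu}\,\E e^{tX}.
\]
The point of introducing the free parameter $t$ is that the moment generating function factorizes over the independent summands, reducing the problem to a one-dimensional optimization.

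Next I would compute the moment generating function. By independence, $\E e^{tX}=\prod_{i=1}^n \E e^{tX_i}=(1-p+pe^t)^n=\bigpar{1+p(e^t-1)}^n$, and the elementary inequality $1+x\le e^x$ applied with $x=p(e^t-1)$ yields $\E e^{tX}\le \exp\bigpar{np(e^t-1)}=\exp\bigpar{\mu(e^t-1)}$. Substituting into the Markov bound gives, for every $t>0$,
\[
\cPr\bigpar{X\ge (1+\epsilon)\mu}\le \exp\bigpar{\mu\bigpar{e^t-1-t(1+\epsilon)}}.
\]
I would then minimize the exponent over $t>0$: differentiating $e^t-1-t(1+\epsilon)$ in $t$ and setting the derivative to zero gives the optimal choice $t=\ln(1+\epsilon)$, which is positive since $\epsilon>0$. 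Plugging this value back in produces the standard multiplicative Chernoff bound
\[
\cPr\bigpar{X\ge (1+\epsilon)\mu}\le \exp\bigpar{-\mu\bigpar{(1+\epsilon)\ln(1+\epsilon)-\epsilon}}.
\]

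It then remains only to replace the exponent by the cleaner quantity $\epsilon^2/3$, that is, to verify the purely analytic inequality
\[
(1+\epsilon)\ln(1+\epsilon)-\epsilon\ \ge\ \frac{\epsilon^2}{3}\qquad\text{for } 0<\epsilon<1.
\]
This is the only delicate point, and I expect it to be the main (if modest) obstacle, since it is precisely where the specific constant $3$ in the statement is pinned down. I would prove it by setting $f(\epsilon)=(1+\epsilon)\ln(1+\epsilon)-\epsilon-\tfrac13\epsilon^2$ and noting $f(0)=0$. Its derivative $f'(\epsilon)=\ln(1+\epsilon)-\tfrac23\epsilon$ also satisfies $f'(0)=0$, while $f''(\epsilon)=\tfrac{1}{1+\epsilon}-\tfrac23$ is positive on $(0,\tfrac12)$ and negative on $(\tfrac12,1)$; hence $f'$ first increases and then decreases on $(0,1)$. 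Since $f'(0)=0$ and $f'(1)=\ln 2-\tfrac23>0$, it follows that $f'>0$ throughout $(0,1)$, so $f$ is strictly increasing and $f(\epsilon)>f(0)=0$, which is exactly the claimed inequality. Combining it with the displayed Chernoff bound yields $\cPr\bigpar{X\ge(1+\epsilon)\mu}\le \exp(-\epsilon^2\mu/3)$, as required.
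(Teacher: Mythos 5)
Your proof is correct. Note that the paper itself gives no proof of this statement: it quotes the Chernoff bound as a well-known concentration inequality and uses it as a black box, so there is nothing internal to compare against. Your argument is the canonical exponential-moment derivation --- Markov's inequality applied to $e^{tX}$, the factorized moment generating function bounded via $1+x\le e^x$, optimization at $t=\ln(1+\epsilon)$ --- followed by the calculus verification that $(1+\epsilon)\ln(1+\epsilon)-\epsilon\ge\epsilon^2/3$ on $(0,1)$, which you carry out correctly (the sign analysis of $f''$ and the endpoint check $\ln 2 - \tfrac23 > 0$ do establish $f'>0$ throughout). This supplies exactly the constant $3$ in the exponent as stated, so the proposal is a complete and valid substitute for the citation.
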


\begin{theorem}[Chebyshev's inequality]\label{thm:chebyshev}
Let $X$ be a random variable. For any $\epsilon>0$, we have

\[\cPr(|X -\E X|\ge \epsilon \E X)
\le \Var X/(\epsilon\E X)^2, \]
where $\Var X$ is the variance of $X$.
\end{theorem}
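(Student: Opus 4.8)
The plan is to derive the stated inequality from Markov's inequality applied to the squared deviation of~$X$ from its mean. First I would record Markov's inequality: for any nonnegative random variable~$Y$ and any $a>0$, one has $\cPr(Y\ge a)\le \E Y/a$. This follows immediately from the pointwise bound $a\,\indic{Y\ge a}\le Y$, which is valid precisely because $Y\ge 0$; taking expectations gives $a\,\cPr(Y\ge a)=\E[a\,\indic{Y\ge a}]\le \E Y$, and dividing by $a>0$ yields the claim.

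Next I would apply this to the nonnegative random variable $Y:=(X-\E X)^2$, whose expectation is by definition $\E Y=\Var X$. For any $t>0$, the event $\{|X-\E X|\ge t\}$ coincides with the event $\{(X-\E X)^2\ge t^2\}$, so Markov's inequality with $a=t^2$ gives
\[\cPr\bigl(|X-\E X|\ge t\bigr)=\cPr\bigl((X-\E X)^2\ge t^2\bigr)\le \frac{\E[(X-\E X)^2]}{t^2}=\frac{\Var X}{t^2}.\]
Finally I would substitute $t=\epsilon\E X$, which is positive whenever $\epsilon>0$ and $\E X>0$ (the regime in which the right-hand side is meaningful, and the only one used in the applications, where $X$ counts a nonnegative quantity). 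This turns the previous display into exactly $\cPr(|X-\E X|\ge \epsilon\E X)\le \Var X/(\epsilon\E X)^2$, completing the proof.

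This argument is entirely routine and I do not anticipate any genuine obstacle. The only points requiring a word of care are the equivalence of the two events $\{|X-\E X|\ge t\}$ and $\{(X-\E X)^2\ge t^2\}$, which holds because squaring is monotone on the nonnegative reals, and the implicit positivity of $\epsilon\E X$ needed for the substitution to be legitimate.
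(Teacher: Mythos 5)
Your proof is correct: the derivation via Markov's inequality applied to $(X-\E X)^2$ is the canonical argument, and the paper itself states this inequality as a well-known fact without proof, so there is nothing to diverge from. Your added remark that the substitution $t=\epsilon\E X$ requires $\E X>0$ is a sensible observation, since the paper's ``relative'' formulation is only meaningful in that regime --- which is indeed the one used in the application, where $X$ is a sum of indicator variables with $\E X=\Omega(n)$.
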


\begin{proof}[Proof of \refT{thm:bridge_to_bs}]
Denote the $i$-th color class (which, by our assumption, consists of two disjoint edges) by $M_i$. 
Our assumption that $G$ is a graph and not a multigraph implies that the matchings $M_i$ are disjoint.

A vertex $v$ of $G$ is called \emph{heavy} if there are at least $\epsilon^2 n/10^6$ rainbow edges incident to it, where $\epsilon>0$ is a small constant to be determined later.
Let $D$ be the set of heavy vertices of $G$.
Then we have
\begin{equation}\label{eq:ubonD}
|D|\le \frac{2\cdot 2\cdot n}{(\epsilon^2 n/10^6)}\le \frac{10^7}{\epsilon^2}.
\end{equation}

Let $S=D\cup Z$ be a random vertex subset of $V(G)$, where each vertex of $V(G)\setminus D$ is included in $Z$ independently with probability $p$, for some constant $p$ to be determined later.
Then 
\begin{equation}\label{eq:E|Z|}
    \E |Z|=(n-|D|)\cdot p \quad \text{ and }\quad \E |S|=\E |Z|+|D|\sim np.
\end{equation}
For $1\le i\le n$ let $X_i$ be the indicator random variable that an edge of color~$i$ is contained in $S$, i.e., $X_i:=\indic{\text{an edge of color $i$ is contained in $S$}}$. Since each vertex is included with probability at least $p$ and $X_i$ is an increasing event with respect to the probability that a vertex is included in~$S$, by inclusion-exclusion we have
\[ \E X_i \ge 2p^2-p^4. \] 
Let 
\begin{equation}\label{eq:def:X}
    X:=\sum_{i=1}^n X_i.
\end{equation}
We have 
\begin{equation}\label{eq:EX}
 \E X\ge n(2p^2-p^4).
\end{equation}
By~\refT{thm:chernoff}, for fixed ~$0<p<1$ and $\epsilon>0$, when~$n$ is large enough we have
\begin{equation}\label{eq:probof|S|}
   \cPr\Bigpar{|S|\ge (1+\epsilon)np}\le \cPr\Big(|Z|\ge (1+\epsilon/2)\E |Z|\Big)\le  \exp(-\Omega(n)). 
\end{equation}
So,  with  probability tending to 1 as $n$ tends to infinity,
\begin{align}
    |S|&\le (1+\epsilon)np. \label{eq:|S|ub}
    \end{align}

Writing $p- (2p^2-p^4)=p(p-1)(p^2+p-1)$,
we see that for $\frac{-1+\sqrt{5}}{2}<p<1$, we have $p< 2p^2-p^4$, yielding the separation between $\E X$ and $\E |S|$, needed for the application of \refT{thm:girthsparsegraph}.

\begin{claim}\label{claim:lbX}
There exist constants $p\in (\frac{-1+\sqrt{5}}{2},1)$ and $\epsilon>0$ such that 
\begin{equation}\label{eq:def:eps}
    (1-\epsilon)(2p^2-p^4)-(1+\epsilon)p \ge [(2p^2-p^4)-p]/3>0,
\end{equation}
and with  probability at least 0.9 for all large $n$,    
    \begin{align}
    X&\ge (1-\epsilon)n(2p^2-p^4).\label{eq:Xlb}
\end{align}
\end{claim}

We fix $0.618\approx\frac{-1+\sqrt{5}}{2}< p<1$ and $\epsilon(p)>0$ satisfying~\eqref{eq:def:eps}.

To prove~\eqref{eq:Xlb}, we shall apply Chebyshev's inequality. For this purpose we have to estimate $\Var X$. With a look at~\eqref{eq:def:X}, we have  
\begin{align}
    \Var X = \E X^2-(\E X)^2= \sum_{i,j} (\E X_iX_j- \E X_i\E X_j).
\end{align}

Note that if the edges in the color classes $i,j$ are vertex-disjoint, then $X_i$ and $X_j$ are independent and $\E X_iX_j- \E X_i\E X_j =0$.

Since the matchings $M_j$ are disjoint, for every $i\in [n]$  at most $6=\binom{4}{2}$  matchings $M_j$ can have an edge contained in  $\bigcup M_i$. This means that there exist at most $2\cdot 6n$ pairs $(M_i, M_j)$ such that an edge from $M_j$ is contained in $\bigcup M_i$, or vice versa. Thus the contribution of such pairs to $\Var X$ is at most $O(n)$.

Apart from vertex-disjointness, there are  two more possible forms of $ M_i \cup  M_j$:

I. three connected components: one 2-path and two disjoint edges, or 

II.  two vertex-disjoint 2-paths.

Examine Case I. 
Let $M_i=\{a,b\}$, where $a=xy,~b=uv$, and let $M_j=\{c,d\}$, where $c=xz$ and $d=st$.

If $x$ is a heavy vertex, then $\E X_iX_j-\E X_i\E X_j=0$: since $\{a\subseteq S\}=\{y\in S\}$, $\{b\subseteq S\}$, $\{c\subseteq S\}=\{z\in S\}$, and $\{d\subseteq S\}$ are mutually independent, 
\begin{align*}
&\E X_i \E X_j \\
=&\Big(\cPr(a\subseteq S \text{ and }b\not\subseteq S)+\cPr(a\not\subseteq S \text{ and }b\subseteq S)+\cPr(a\subseteq S \text{ and }b\subseteq S)\Big)\\
&\cdot\Big(\cPr(c\subseteq S \text{ and }d\not\subseteq S)+\cPr(c\not\subseteq S \text{ and }d\subseteq S)+\cPr(c\subseteq S \text{ and }d\subseteq S)\Big)\\
=&\Big(\cPr(a\subseteq S)\cPr(b\not\subseteq S)+\cPr(a\not\subseteq S)\cPr(b\subseteq S)+\cPr(a\subseteq S)\cPr(b\subseteq S)\Big)\\
&\cdot \Big(\cPr(c\subseteq S)\cPr(d\not\subseteq S)+\cPr(c\not\subseteq S)\cPr(d\subseteq S)+\cPr(c\subseteq S)\cPr(d\subseteq S)\Big)\\
=&\cPr(\text{$a,c \subseteq S$ and $b,d \not \subseteq S$})+\cPr(\text{$a,d \subseteq S$ and  $b,c \not \subseteq S$})+\cPr(\text{$b,c \subseteq S$ and $a,d \not \subseteq S$})\\
&+\cPr(\text{$b,d \subseteq S$ and $a,c \not \subseteq S$})
+\cPr(\text{$a,b,c\subseteq S$ and $d\not\subseteq S$})+\cPr(\text{$a,b,d\subseteq S$ and $c\not\subseteq S$})\\
&+\cPr(\text{$a,c,d\subseteq S$ and $b\not\subseteq S$})+\cPr(\text{$b,c,d\subseteq S$ and $a\not\subseteq S$})+\cPr(\text{$a,b,c,d\subseteq S$})\\
=&\E X_iX_j.
\end{align*}

If $x$ is not a heavy vertex, the contribution to $\Var X$ for such $(M_i,M_j)$ is at most $4\cdot n\cdot \frac{\epsilon^2 n}{10^6}$, since there are at most $n$ ways to choose $M_i$, four ways to choose $x$, and at most $\frac{\epsilon^2 n}{10^6}$ ways to choose $M_j$.

In case II, let $M_i=\{a,b\}$, where $a=xy,~b=uv$, and let $M_j=\{c,d\}$, where $c=xz$ and $d=ut$. 

If both $x$ and $u$ are heavy vertices, then similarly to case I, we have $\E X_iX_j-\E X_i\E X_j=0$.  Otherwise the contribution to $\Var X$ for such $(M_i,M_j)$ is at most $4\cdot n\cdot \frac{\epsilon^2 n}{10^6}$.

Summing, we have 
\[ \Var X=O(n)+2\cdot4\cdot n\cdot \frac{\epsilon^2 n}{10^6}\le \frac{\epsilon^2 n^2}{10^5}.   \]
Since $1/2\le p<1$, we have $2p^2-p^4\ge 7/16$.
Applying Chebyshev's inequality, we have
\begin{align*}
    \cPr(X \le (1-\epsilon) n\cdot (2p^2-p^4))&\le \cPr(X\le \E X -\epsilon n \cdot (2p^2-p^4)) \\
    &\le \frac{\Var X}{(\epsilon n \cdot (2p^2-p^4))^2} \le \frac{\epsilon^2 n^2}{10^5 \epsilon^2 n^2(7/16)^2}\le 1/10.
\end{align*}
This proves~\refCl{claim:lbX}.

 Combining~\eqref{eq:Xlb} with~\eqref{eq:probof|S|}, we have that with probability at least 1/2 for all large $n$,
 \[ |S|\le (1+\epsilon)np< (1-\epsilon)n(2p^2-p^4) \le X. \]

Theorem \ref{thm:bridge_to_bs} now follows from~\eqref{eq:def:eps}, upon taking $c:=(1+\epsilon)p$ and $\delta:= [(2p^2-p^4)-p]/3$.
\end{proof}

\subsection{Fewer than $n$ sets}
In the original CHC, 
each set of edges is a star associated with a vertex, hence it was natural that there are~$n$ sets. In the rainbow undirected case there is no natural choice of the number of sets. Indeed, the main theorem is valid also with fewer than $n$ sets.

\begin{theorem}\label{thm:alpha}
For any constant $\alpha> \frac{3\sqrt{6}}{8}$, there exists a constant $C$ such that for any $n$-vertex graph~$G$ and edge coloring of~$G$ with~$\alpha n$ colors, if each color class is a matching of size 2, then the rainbow girth of~$G$ is at most~$C\log n$.
\end{theorem}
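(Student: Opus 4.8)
The plan is to prove the exact analogue of \refT{thm:bridge_to_bs} for $\alpha n$ matchings and then feed it into \refT{thm:girthsparsegraph} precisely as in the deduction of \refT{thm:main}. That is, I would show: for $\alpha > \frac{3\sqrt{6}}{8}$ there exist universal $c,\delta>0$ such that, for all large $n$, any $n$-vertex graph whose $\alpha n$ color classes are matchings of size $2$ admits a vertex set $S$ with $|S|\le cn$ spanning a rainbow set of at least $(c+\delta)n$ edges. Granting this, the graph $H$ on $S$ formed by these rainbow edges has at most $cn$ vertices and at least $(c+\delta)n$ edges, hence excess $k\ge\delta n=\Omega(n)$; \refT{thm:girthsparsegraph} then produces a cycle of length $O(\log n)$, which is rainbow because its edges carry distinct colors.

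For the bridge statement I would reuse the random-subset construction verbatim: include each vertex in $S$ independently with probability $p\in(0,1)$, let $X_i$ be the indicator that color class $i$ lies inside $S$ (so $\E X_i=2p^2-p^4$), and set $X=\sum_{i=1}^{\alpha n}X_i$, giving $\E|S|=np$ and $\E X=\alpha n(2p^2-p^4)$. Chernoff yields $|S|\le(1+\epsilon)np$ with high probability. The variance estimate carries over with only cosmetic changes: since $G$ is a graph the matchings are disjoint, so only $O(\alpha n)=O(n)$ pairs $(i,j)$ have dependent $X_i,X_j$, each covariance is bounded, and hence $\Var X=O(n)=o((\E X)^2)$; Chebyshev then gives $X\ge(1-\epsilon)\E X$ with high probability. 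Note that here one need not re-examine the covariance signs, since the crude count of $O(n)$ nonzero terms already forces $\Var X=o((\E X)^2)$.

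The only genuinely new point is the choice of $p$, which is where the constant $\frac{3\sqrt{6}}{8}$ enters. As in the original argument, the separation $\E X>\E|S|$ demands $\alpha(2p^2-p^4)>p$, equivalently $\alpha(2p-p^3)>1$. To find for which $\alpha$ a valid $p$ exists, I would maximize $h(p)=2p-p^3$ over $(0,1)$: here $h'(p)=2-3p^2$ vanishes at $p^\ast=\sqrt{2/3}$, with $h(p^\ast)=\frac{4\sqrt6}{9}$, so a suitable $p$ exists precisely when $\alpha>1/h(p^\ast)=\frac{3\sqrt6}{8}$. Thus for $\alpha>\frac{3\sqrt6}{8}$ I fix $p$ near $\sqrt{2/3}$ with $\alpha(2p^2-p^4)>p$, choose $\epsilon>0$ small enough that $(1-\epsilon)\alpha(2p^2-p^4)-(1+\epsilon)p\ge[\alpha(2p^2-p^4)-p]/3>0$, and put $c:=(1+\epsilon)p$ and $\delta:=[\alpha(2p^2-p^4)-p]/3$. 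With high probability $|S|\le cn$ and $X\ge(1-\epsilon)\alpha n(2p^2-p^4)\ge(c+\delta)n$, which proves the bridge and hence \refT{thm:alpha}.

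I expect no serious obstacle, as the probabilistic core is identical to that of \refT{thm:bridge_to_bs} and the substantive content is the one-variable optimization that pins down the threshold. The one point to watch is that the optimal $p=\sqrt{2/3}\approx 0.816$ is bounded away from $1$, unlike in the original proof; I would therefore make sure the variance bound is justified by the $O(n)$-term count, rather than by the covariance-sign computation that was only invoked for $p$ close to $1$.
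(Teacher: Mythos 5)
Your reduction to a bridge statement for $\alpha n$ classes, and your optimization of the separation condition $\alpha(2p^2-p^4)>p$ by maximizing $2p-p^3$ at $p=\sqrt{2/3}=\tfrac{\sqrt6}{3}$, match the paper exactly (the paper phrases the same computation as a tangency condition between $y=p$ and $y=\alpha(2p^2-p^4)$, arriving at the same threshold $\tfrac{3\sqrt6}{8}$ and the same optimal $p$). The problem is the variance estimate, which is precisely the step you declare to be cosmetic. Your claim that ``only $O(\alpha n)=O(n)$ pairs $(i,j)$ have dependent $X_i,X_j$'' is false. Edge-disjointness of the matchings only bounds the number of pairs in which an \emph{edge} of $M_j$ lies inside the four-vertex set $\bigcup M_i$ (at most $\binom{4}{2}$ per $i$, hence $O(n)$ such pairs). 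But two color classes are already dependent when they merely share a vertex (the paper's types I and II), and a single vertex $x$ can lie in up to $n-1$ distinct edges, hence in $\Theta(n)$ distinct color classes. Taking $M_i=\{xy_i,\,u_iv_i\}$ for $\Theta(n)$ indices $i$, with all of the $y_i,u_i,v_i$ distinct, produces $\Theta(n^2)$ pairs of type I. So the crude term count gives only $\Var X=O(n^2)$, which is \emph{not} $o((\E X)^2)$, and Chebyshev yields nothing --- especially since, as $\alpha\downarrow\tfrac{3\sqrt6}{8}$, the separation forces $\epsilon\to 0$ and the bound $\Var X/(\epsilon\E X)^2$ blows up.

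This is exactly why the paper cannot, and does not, bypass the covariance-sign analysis: it is the only tool it has for controlling the quadratically many pairs of types I and II. For \refT{thm:alpha} specifically, the paper inserts an additional Claim and Corollary: using the (claimed) negativity of the type I/II covariances for $p$ near $1$ together with $\Var X\ge 0$, it deduces the purely combinatorial statement that there are in fact only $O(n)$ pairs of types I and II, and only \emph{then} concludes $\Var X=o((\E X)^2)$ for every constant $p$, including $p=\tfrac{\sqrt6}{3}$. You correctly sensed the danger (that the optimal $p$ is bounded away from $1$, where the sign computation was invoked), but the repair you propose --- falling back on an $O(n)$ count of dependent pairs --- is the one step that does not survive scrutiny. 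To complete your argument you would need either to prove the paper's Claim bounding the number of type I/II pairs, or to control the sign or magnitude of their covariances at $p=\tfrac{\sqrt6}{3}$; as written, the concentration of $X$ is unproven and the proposal has a genuine gap.
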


 
 
\begin{proof}
For the argument in the proof of \refT{thm:bridge_to_bs} to work with $\alpha n$ colors, we need to have 
 $p< \alpha (2p^2-p^4)$ for some $p$ (which would imply separation between $\E |S|\sim pn$ and $\E X\ge \alpha n(2p^2-p^4)$).

  Thus we need to find a minimal  $0<\alpha_0<1$ such that $p=\alpha_0 (2p^2-p^4)$ for some $p\in(0,1)$. This will happen when the two curves $y(p)=p$ and $y(p)=\alpha_0 (2p^2-p^4)$ are tangent, namely $(\alpha_0 (2p^2-p^4))'=p'=1$. The above two constraints and $\alpha_0p\neq 0$ imply that $\alpha_0=\frac{3\sqrt{6}}{8}$ and the only feasible $p$ is $\frac{\sqrt{6}}{3}$.
\end{proof}

It would be interesting to find the optimal value of~$\alpha$ in~\refT{thm:alpha}. For example, it should be at least~$1/2$: for even~$n$ and $G=C_n$, assume the edges of the cycle in order are~$e_1,e_2,\dots,e_n$. If we color the edges $e_i$ and~$e_{i+n/2}$ by color~$i$, then there is no rainbow cycle.

\noindent\textbf{Acknowledgements.}
We thank the referees for their helpful suggestions. We also thank Xiaozheng Chen, Yuhui Cheng, and Ruonan Li for identifying omissions in earlier versions of this work.

\small

\normalsize
\end{document}